\newtheorem{theorem}{Theorem}[section]
\newtheorem{proposition}[theorem]{Proposition}
\newtheorem{lemma}[theorem]{Lemma}
\theoremstyle{definition}
\theoremstyle{remark}
\numberwithin{equation}{section}
\newcommand{\hocolim}{\operatorname{hocolim}}
\newcommand{\rt}{\operatorname{\widetilde{\rtimes}}}
\title[Beben and Theriault's theorem on homotopy fibers]{A short elementary proof of Beben and Theriault's theorem on homotopy fibers}
\author[D. Kishimoto]{Daisuke Kishimoto}
\address{Faculty of Mathematics, Kyushu University, Fukuoka, 819-0395, Japan}
\email{kishimoto@math.kyushu-u.ac.jp}
\author[Y. Minowa]{Yuki Minowa}
\address{Department of Mathematics, Kyoto University, Kyoto, 606-8502, Japan}
\email{minowa.yuki.48z@st.kyoto-u.ac.jp}
\date{\today}
\subjclass[2010]{55P35, 55Q15}
\keywords{homotopy fiber, cone attachment, Whitehead product}
\begin{document}

\maketitle

\begin{abstract}
  Beben and Theriault proved a theorem on the homotopy fiber of an extension of a map with respect to a cone attachment, which has produced several applications. We give a short and elementary proof of this theorem.
\end{abstract}


\section{Introduction}\label{Introduction}

It is a fundamental problem in algebraic topology to describe how the homotopy type of a space changes after a cone attachment, and the problem has been intensely studied in connection with LS category. Describing the effect of a cone attachment on the homotopy type of a related space is of particular importance too. For example, relations between $\Omega X$ and $\Omega(X\cup CA)$ were studied in \cite{FT,G,HL}. Beben and Theriault \cite{BT} considered a homotopy commutative diagram
\begin{equation}
  \label{diagram}
  \xymatrix{
    &F\ar[r]^h\ar[d]^j&F'\ar[d]^{j'}\\
    A\ar[r]^f&E\ar[r]\ar[d]^p&E'\ar[d]^{p'}\\
    &B\ar@{=}[r]&B
  }
\end{equation}
where $B$ is path-connected, the middle row is a homotopy cofibration, and the two columns are homotopy fibrations. They gave a nice description of $F'$ in terms of $F$. To state this result, we set notation. The half smash product of spaces $X$ and $Y$ is defined by
\[
  X\rtimes Y=(X\times Y)/(*\times Y).
\]
Let $\epsilon\colon\Sigma\Omega X\to X$ denote the evaluation map.

\begin{theorem}
  \label{main}
  Consider the homotopy commutative diagram \eqref{diagram}. If the map $\Omega p\colon\Omega E\to\Omega B$ has a right homotopy inverse $s\colon\Omega B\to\Omega E$, then there is a homotopy cofibration
  \[
    A\rtimes\Omega B\xrightarrow{\theta}F\xrightarrow{h}F'.
  \]
  Moreover, if $A$ is a suspension, then there is a homotopy commutative diagram
  \[
    \xymatrix{
      A\rtimes\Omega B\ar[rr]^(.57)\theta\ar[d]_\simeq&&F\ar[d]^j\\
      (A\wedge\Omega B)\vee A\ar[rr]^(.64){[f,\epsilon\circ\Sigma s]+f}&&E.
    }
  \]
\end{theorem}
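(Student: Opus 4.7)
The strategy is to realize $F'$ as a homotopy pushout of homotopy fibers and then extract $\theta$ by using the right-inverse $s$ to trivialize the connecting map of the left-hand fibration.

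Observe first that $pf \simeq \ast$: the composite $A \xrightarrow{f} E \to E'$ extends over $CA$ in the model $E' \simeq E \cup_f CA$, and since $p = p' \circ (E \to E')$, the map $pf$ factors up to homotopy through a contractible space. Choose a lift $\tilde f \colon A \to F$ with $j\tilde f \simeq f$. Viewing $E'$ as the homotopy pushout $\hocolim(E \xleftarrow{f} A \to \ast)$ and applying the homotopy fiber functor over $B$, which preserves homotopy pushouts (Mather's second cube theorem), one obtains
\[
F' \simeq \hocolim\bigl(F \xleftarrow{\alpha} A \times \Omega B \xrightarrow{\mathrm{pr}} \Omega B\bigr),
\]
where the identification $\mathrm{hofib}(pf) \simeq A \times \Omega B$ uses a chosen null-homotopy of $pf$ and $\alpha(a,\omega) = \mu(\omega,\tilde f(a))$ with $\mu$ the holonomy action of $\Omega B$ on $F$.

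Restricting $\alpha$ along $\{\ast\}\times\Omega B$ recovers the connecting map $\partial\colon \Omega B \to F$ of $F \to E \to B$, and the right-inverse $s$ forces $\partial \simeq \partial \circ \Omega p \circ s \simeq \ast$ from the fibration exact sequence $\Omega E \xrightarrow{\Omega p} \Omega B \xrightarrow{\partial} F$. A null-homotopy of $\partial$ extends $\alpha$ through the quotient cofibration $A\times\Omega B \to A\rtimes\Omega B$, producing $\theta\colon A\rtimes\Omega B \to F$. A standard manipulation (contracting the cylinder on $\{\ast\}\times\Omega B$ inside the double mapping cylinder via this null-homotopy) identifies the pushout above with the mapping cone $F \cup_\theta C(A \rtimes \Omega B)$, which yields the homotopy cofibration $A\rtimes\Omega B \xrightarrow{\theta} F \xrightarrow{h} F'$.

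For the suspension case, apply the splitting $A \rtimes \Omega B \simeq A \vee (A \wedge \Omega B)$ valid whenever $A$ is a suspension. On the $A$-summand, $j\theta$ restricts to $j\tilde f \simeq f$ by the choice of lift. On the smash summand, $j\theta$ encodes the holonomy twist determined by $s$, and the identity $p \circ (\epsilon \circ \Sigma s) \simeq \epsilon$ together with the Samelson-product description of Whitehead products as commutators of adjoints in $\Omega E$ identifies this component with $[f,\epsilon\circ\Sigma s]$. I expect the main obstacle to be precisely this last identification: the two auxiliary null-homotopies used to define $\theta$ (of $pf$ and of $\partial$) must be reconciled with the universal Whitehead-product formula, a reconciliation that relies on adjoint-level analysis available precisely because $A$ is a suspension.
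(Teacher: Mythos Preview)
Your outline for the first statement is essentially the paper's argument. The paper also computes $F'$ as a homotopy pushout of the fibers of $E\to B$, $A\to B$, $\ast\to B$ (citing Dror Farjoun's result in place of Mather's cube theorem), uses the holonomy action $\Gamma$ to describe the map $A\times\Omega B\to F$, and then exploits the null-homotopy of $\partial\colon\Omega B\to F$ coming from $s$ to collapse the $\Omega B$ factor. One technical remark: a null-homotopy of $\partial$ does not literally factor $\alpha$ through the quotient $A\rtimes\Omega B$; the paper handles this by working with the intermediate model $A\rt\Omega B=(A\times\Omega B)\cup(\ast\times C\Omega B)$ and only passing to $A\rtimes\Omega B$ at the end via the evident homotopy equivalence. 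Your ``standard manipulation'' is made precise there by comparing two cofiber sequences of pushout diagrams.

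Where the paper diverges from your sketch is exactly the step you flag as the obstacle. Rather than invoking Samelson products and tracking the two auxiliary null-homotopies through an adjoint-level computation, the paper introduces a natural map
\[
\rho\colon X\rt\Omega Y\longrightarrow X\vee Y
\]
and proves two elementary lemmas: that $\mathsf{j}\circ\widetilde{\Gamma}$ factors through $\rho$ followed by $\mathsf{j}+1\colon F\vee E\to E$, and that $\rho$ precomposed with a canonical map $w\colon\Omega X\ast\Omega Y\to X\rt\Omega Y$ is the universal Whitehead product $[\epsilon,\epsilon]$. A single large naturality diagram then identifies $j\circ\theta$ on the join summand with $[f,\epsilon\circ\Sigma s]$, with no need to reconcile competing null-homotopies or pass to loop-space commutators. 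This is the paper's main technical simplification over the original Beben--Theriault argument, and it sidesteps precisely the bookkeeping you anticipated; your Samelson-product route should also work but would reproduce more of the complexity the paper is trying to avoid.
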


The homotopy commutative diagram \eqref{diagram} appears in several contexts, and so Theorem \ref{main} has been applied to produce several interesting results such as a loop space decomposition of certain manifolds \cite{C1,C2,HT1,HT2,T}. However, Beben and Theriault's proof of Theorem \ref{main} is long and complicated, which needs a delicate analisys of the action of the loop space on the fiber of a fibration and to consider relative Whitehad products. In this paper, we provide a short and elementary proof of Theorem \ref{main}. Our proof is basically an elementary analysis of (homotopy) pushouts, and does not need a delicate analysis of the action in a fibration and relative Whitehead products.

We will always assume that every space has a non-degenerate basepoint and every map is basepoint preserving.

\subsection*{Acknowledgement}

The first author was partially supported by JSPS KAKENHI JP22K03284 and JP19K03473, and the second author was partially supported by JST, the establishment of university fellowships towards the creation of science technology innovation, Grant Number JPMJFS2123.


\section{Proof}\label{Proof}

We consider the commutative diagram \eqref{diagram}, and assume that the map $\Omega p\colon\Omega E\to\Omega B$ has a right homotopy inverse. Let
\[
  \mathsf{E}=\{(x,l)\in E\times B^{[0,1]}\mid p(x)=l(0)\}
\]
and define a map $\mathsf{p}\colon\mathsf{E}\to B$ by $\mathsf{p}(x,l)=l(1)$. Let
\[
  \mathsf{F}=\{(x,l)\in E\times B^{[0,1]}\mid p(x)=l(0),\,l(1)=*\}.
\]
Then we get a fibration sequence
\begin{equation}
  \label{fibration}
  \mathsf{F}\xrightarrow{\mathsf{j}}\mathsf{E}\xrightarrow{\mathsf{p}}B
\end{equation}
and a homotopy action
\[
  \Gamma\colon\mathsf{F}\times\Omega B\to\mathsf{F},\quad((x,l),l')\mapsto (x,l*l').
\]
Clearly, we may replace the homotopy fibration $F\xrightarrow{j}E\xrightarrow{p}B$ in \eqref{diagram} with the fibration sequence \eqref{fibration}. In particular, the map $f\colon A\to E$ will be replaced with the composite $\mathsf{f}\colon A\xrightarrow{f}E\xrightarrow{\rm incl}\mathsf{E}$. Since $\mathsf{p}\circ\mathsf{f}\simeq*$ and $\mathsf{p}\colon\mathsf{E}\to B$ is a fibration, there is a map $\mathsf{f}'\colon A\to\mathsf{E}$ such that $\mathsf{f}\simeq\mathsf{f}'$ and $\mathsf{p}\circ\mathsf{f}'=*$. Then we may assume $\mathsf{p}\circ\mathsf{f}=*$. Let $p_i$ denote the $i$-th projection.

Let $\mathrm{Cyl}(g)$ denote the mapping cylinder of a map $g\colon X\to Y$. Let $i\colon X\to\mathrm{Cyl}(g)$ and $q\colon I_g\to Y$ denote the inclusion and the projection, respectively.

\begin{lemma}
  \label{Gamma}
  There is a commutative diagram
  \[
    \xymatrix{
    \mathsf{F}\times\Omega B\ar[r]^i\ar[d]^{p_1}&\mathrm{Cyl}(\Gamma)\ar[d]^{\mathsf{j}'}\\
    \mathsf{F}\ar[r]^{\mathsf{j}}&\mathsf{E}.
    }
  \]
\end{lemma}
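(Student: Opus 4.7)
The aim is to construct a map $\mathsf{j}' \colon \mathrm{Cyl}(\Gamma) \to \mathsf{E}$ satisfying $\mathsf{j}' \circ i = \mathsf{j} \circ p_1$. Recall $\mathrm{Cyl}(\Gamma) = \bigl((\mathsf{F} \times \Omega B) \times I \sqcup \mathsf{F}\bigr) / \bigl((z, 1) \sim \Gamma(z)\bigr)$, with $i(z) = [z, 0]$. Commutativity forces $\mathsf{j}'([((x,l), l'), 0]) = (x, l)$, while the cylinder identification forces $\mathsf{j}'([((x,l), l'), 1]) = \mathsf{j}(\Gamma((x, l), l')) = (x, l * l')$. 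The key observation is that $\mathsf{E}$ consists of pairs $(x, m)$ where the path $m$ starts at $p(x)$ but is \emph{not} constrained to end at $*$ (unlike $\mathsf{F}$); this extra freedom is exactly what lets us interpolate between $(x, l)$ and $(x, l*l')$ by a continuous family of paths through $\mathsf{E}$.

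Define $\mathsf{j}'$ to agree with $\mathsf{j}$ on $\mathsf{F}$, and on $(\mathsf{F} \times \Omega B) \times I$ by
\[
  \mathsf{j}'\bigl([((x,l), l'), t]\bigr) = (x, m_t), \qquad
  m_t(s) = \begin{cases} l\!\left(\dfrac{s}{1 - t/2}\right), & 0 \le s \le 1 - t/2, \\[4pt] l'(2s - 2 + t), & 1 - t/2 \le s \le 1. \end{cases}
\]
Both branches give $*$ at $s = 1 - t/2$, and $m_t(0) = l(0) = p(x)$, so $(x, m_t) \in \mathsf{E}$. Joint continuity of $(s, t) \mapsto m_t(s)$ on $[0,1]^2$ follows from the pasting lemma applied to the closed regions $\{s \le 1 - t/2\}$ and $\{s \ge 1 - t/2\}$, on each of which the formula is manifestly continuous; by the exponential law this upgrades to continuity of $\mathsf{j}'$. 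At $t = 0$ the second branch collapses and $m_0 = l$, so $\mathsf{j}' \circ i = \mathsf{j} \circ p_1$. At $t = 1$ the formula yields $m_1 = l * l'$, matching $\mathsf{j}(\Gamma((x,l), l'))$, so $\mathsf{j}'$ descends through the cylinder quotient.

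The only real subtlety is joint continuity at $(s, t) = (1, 0)$, where the second branch occupies the interval $[1 - t/2, 1]$ which collapses to a single point. A naive linear rescaling of $l'$ onto $[1 - t/2, 1]$ (which traverses the whole loop at every $t > 0$) would fail here, since the values of $l'$ on the vanishing interval need not approach $l(1) = *$. Scaling the $l'$-parameter by $t$ in the formula above circumvents this: at time $t$ only the initial arc $l'([0, t])$ is visited, and this arc contracts to $l'(0) = *$, matching $l(1)$. Once this design choice is in place, the remaining checks are routine bookkeeping.
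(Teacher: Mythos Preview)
Your construction is correct and is essentially the same approach as the paper's, only made fully explicit. The paper observes that $\mathsf{j}\circ\Gamma\simeq\mathsf{j}\circ p_1$ (since in $\mathsf{E}$ the endpoint of the path is unconstrained) and then invokes the standard fact that a homotopy $H$ with $H_1=\mathsf{j}\circ\Gamma$ extends over $\mathrm{Cyl}(\Gamma)$; your formula for $m_t$ is precisely an explicit such homotopy, and your care with the parametrisation near $(s,t)=(1,0)$ is exactly the verification that this homotopy is continuous.
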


\begin{proof}
  By the definition of the map $\Gamma$, there is a homotopy commutative diagram
  \[
    \xymatrix{
    \mathsf{F}\times\Omega B\ar[r]^(.6)\Gamma\ar[d]^{p_1}&\mathsf{F}\ar[d]^{\mathsf{j}}\\
    \mathsf{F}\ar[r]^{\mathsf{j}}&\mathsf{E}.
  }
  \]
  Then the statement is proved by the usual homotopy extension argument.
\end{proof}

Since we are assuming that the map $\Omega p\colon\Omega E\to\Omega B$ has a right homotopy inverse,  the map $\Omega\mathsf{p}\colon\Omega\mathsf{E}\to\Omega B$ has a right homotopy inverse too, say $\mathsf{s}\colon\Omega B\to\Omega\mathsf{E}$. In particular, the map $\Omega B\to\mathsf{F}$ is null-homotopic, and we fix a null homotopy $H\colon C\Omega B\to\mathsf{F}$. Let $\bar{\epsilon}\colon C\Omega X\to X$ denote the composite $C\Omega X\xrightarrow{\rm proj}\Sigma\Omega X\xrightarrow{\epsilon}X$.

\begin{lemma}
  There is a commutative diagram
  \[
    \xymatrix{
      \mathsf{F}\vee C\Omega\mathsf{E}\ar[rr]^{i+i\circ H\circ C\Omega\mathsf{p}}\ar[d]^i&&\mathrm{Cyl}(\Gamma)\ar[d]^{\mathsf{j}'}\\
      \mathrm{Cyl}(\mathsf{j}+\bar{\epsilon})\ar[rr]^{q'}&&\mathsf{E}
    }
  \]
  where the map $q'$ is homotopic to the projection $q\colon\mathrm{Cyl}(\mathsf{j}+\bar{\epsilon})\to\mathsf{E}$.
\end{lemma}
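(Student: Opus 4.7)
The plan is to construct $q'$ piecewise on the mapping cylinder
\[
  \mathrm{Cyl}(\mathsf{j}+\bar\epsilon)\;=\;\bigl((\mathsf{F}\vee C\Omega\mathsf{E})\times I\,\sqcup\,\mathsf{E}\bigr)\big/\bigl((w,0)\sim(\mathsf{j}+\bar\epsilon)(w)\bigr),
\]
and then verify the two assertions separately. First I would read off what commutativity of the outer square forces on $q'\circ i$: by Lemma \ref{Gamma} the composite $\mathsf{j}'\circ i$ restricted to $\mathsf{F}\times\{*\}\subset\mathsf{F}\times\Omega B$ equals $\mathsf{j}$, so one needs $q'(i(x))=\mathsf{j}(x)$ on the $\mathsf{F}$-summand and $q'(i(c))=\mathsf{j}\circ H\circ C\Omega\mathsf{p}(c)$ on the $C\Omega\mathsf{E}$-summand at the top level $t=1$ of the cylinder, while the attaching relation forces $q'(x,0)=\mathsf{j}(x)$ and $q'(c,0)=\bar\epsilon(c)$ at the bottom level.

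The strategy is therefore to set $q'=\mathrm{id}_{\mathsf{E}}$ on $\mathsf{E}$, to take the constant homotopy $q'(x,t)=\mathsf{j}(x)$ on $\mathsf{F}\times I$ (the two endpoints already coincide), and to take $q'=K$ on $C\Omega\mathsf{E}\times I$ where $K\colon C\Omega\mathsf{E}\times I\to\mathsf{E}$ is any chosen pointed homotopy from $\bar\epsilon$ to $\mathsf{j}\circ H\circ C\Omega\mathsf{p}$. Such a $K$ exists because $C\Omega\mathsf{E}$ is based-contractible as a reduced cone, so the pointed set $[C\Omega\mathsf{E},\mathsf{E}]_*$ is trivial and any two pointed maps $C\Omega\mathsf{E}\to\mathsf{E}$ are based-homotopic. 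The three pieces glue consistently, since they agree on $(\mathsf{F}\vee C\Omega\mathsf{E})\times\{0\}$ by construction and they agree at the wedge point $(*,t)$ because $K$ is chosen pointed; hence $q'$ is a well-defined continuous map, and it makes the square commute on the nose by construction.

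For the last assertion, I would observe that $q'$ is a retraction of $\mathrm{Cyl}(\mathsf{j}+\bar\epsilon)$ onto $\mathsf{E}$ that agrees with $q$ on $\mathsf{E}$: writing $\iota\colon\mathsf{E}\to\mathrm{Cyl}(\mathsf{j}+\bar\epsilon)$ for the canonical inclusion, one has $q'\circ\iota=\mathrm{id}_{\mathsf{E}}=q\circ\iota$. Since $\iota$ is a deformation retract with $q$ as its retraction, so that $\iota\circ q\simeq\mathrm{id}_{\mathrm{Cyl}(\mathsf{j}+\bar\epsilon)}$ via a based homotopy, composing on the left with $q'$ yields $q'\simeq q'\circ\iota\circ q=q$, as required.

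I do not foresee any serious obstacle: the only real content is the based-contractibility of the reduced cone $C\Omega\mathsf{E}$, which provides the homotopy $K$ for free. The modest bookkeeping point is to keep $K$ basepoint-preserving so that the $\mathsf{F}$- and $C\Omega\mathsf{E}$-pieces of $q'$ match at the wedge point of $\mathsf{F}\vee C\Omega\mathsf{E}$; everything else is a formal consequence of the universal property of the mapping cylinder and the standard fact that two retractions of a mapping cylinder onto its base are homotopic.
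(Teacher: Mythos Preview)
Your argument is correct and is essentially an explicit unpacking of the paper's proof. The paper simply asserts that the square with left arrow $\mathsf{j}+\bar\epsilon$ (in place of the cylinder inclusion) is homotopy commutative and then invokes ``the usual homotopy extension argument''; you supply both missing ingredients---the reason for homotopy commutativity (contractibility of $C\Omega\mathsf{E}$ gives the homotopy $K$) and the cylinder construction of $q'$ together with the retraction argument for $q'\simeq q$---so the two proofs coincide in substance.
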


\begin{proof}
  There is a homotopy commutative diagram
  \[
    \xymatrix{
      \mathsf{F}\vee C\Omega\mathsf{E}\ar[rr]^{i+i\circ H\circ C\Omega\mathsf{p}}\ar[d]^{\mathsf{j}+\bar{\epsilon}}&&\mathrm{Cyl}(\Gamma)\ar[d]^{\mathsf{j}'}\\
      \mathsf{E}\ar@{=}[rr]&&\mathsf{E}.
    }
  \]
  Then the usual homotopy extension argument proves the statement.
\end{proof}

Let $X\rt Y=(X\times Y)\cup(*\times CY)$. We define a map $\rho\colon X\rt\Omega Y\to X\vee Y$ by the induced map between the pushouts of the two rows in the commutative diagram
\[
  \xymatrix{
    X\times\Omega Y\ar[d]^{p_1}&\Omega Y\ar[l]\ar[r]\ar[d]&C\Omega Y\ar[d]^{\bar{\epsilon}}\\
    X&\ast\ar[l]\ar[r]&Y.
  }
\]
Then $\rho$ is natural with respect to $X$ and $Y$.

\begin{lemma}
  \label{Gamma-rho}
  There is a homotopy commutative diagram
  \[
    \xymatrix{
      \mathsf{F}\rt\Omega\mathsf{E}\ar[rr]^{\widetilde{\Gamma}\circ(1\rt\Omega\mathsf{p})}\ar[d]^\rho&&\mathsf{F}\ar[d]^{\mathsf{j}}\\
      \mathsf{F}\vee\mathsf{E}\ar[rr]^{\mathsf{j}+1}&&\mathsf{E}
    }
  \]
  where $\widetilde{\Gamma}$ is an extension of $\Gamma$.
\end{lemma}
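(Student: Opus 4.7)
The plan is to prove the homotopy commutativity by exploiting the pushout structure $\mathsf{F}\rt\Omega\mathsf{E}=(\mathsf{F}\times\Omega\mathsf{E})\cup_{\Omega\mathsf{E}}C\Omega\mathsf{E}$ and constructing a homotopy on each summand that can be glued along the overlap $\Omega\mathsf{E}$. Writing $\Psi=\mathsf{j}\circ\widetilde\Gamma\circ(1\rt\Omega\mathsf{p})$ and $\Psi'=(\mathsf{j}+1)\circ\rho$, I will build a homotopy $\Psi\simeq\Psi'$ piecewise.

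On $\mathsf{F}\times\Omega\mathsf{E}$ the two composites evaluate to $(\xi,\alpha)\mapsto\mathsf{j}\circ\Gamma(\xi,\Omega\mathsf{p}(\alpha))$ and $(\xi,\alpha)\mapsto\mathsf{j}(\xi)$. A homotopy $\Phi_1$ between them follows from Lemma~\ref{Gamma}: the map $\mathsf{j}'\colon\mathrm{Cyl}(\Gamma)\to\mathsf{E}$ provided there implicitly supplies a homotopy $\mathsf{j}\circ\Gamma\simeq\mathsf{j}\circ p_1$ on $\mathsf{F}\times\Omega B$, which I precompose with $1\times\Omega\mathsf{p}$; explicitly, one can take $\Phi_1((x,l),\alpha,s)=(x,\,l*\Omega\mathsf{p}(\alpha)|_{[0,s]})$ using path concatenation in the path space $\mathsf{E}$. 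On $C\Omega\mathsf{E}$ the composites become $\mathsf{j}\circ H\circ C\Omega\mathsf{p}$ and $\bar\epsilon$, and a homotopy $\Phi_2$ between them is delivered directly by the preceding lemma, whose commutative diagram involving $\mathrm{Cyl}(\mathsf{j}+\bar\epsilon)$ together with the equivalence $q'\simeq q$ yields $\mathsf{j}\circ H\circ C\Omega\mathsf{p}\simeq\bar\epsilon$.

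The main obstacle is gluing $\Phi_1$ and $\Phi_2$ coherently along $\Omega\mathsf{E}$: the two restrictions $\Phi_1|_{\{*_\mathsf{F}\}\times\Omega\mathsf{E}}$ and $\Phi_2|_{\Omega\mathsf{E}}$ are both null-homotopies of the common map $\alpha\mapsto\mathsf{j}\circ\Gamma(*_\mathsf{F},\Omega\mathsf{p}(\alpha))\colon\Omega\mathsf{E}\to\mathsf{E}$, but a priori they differ by a class in $[\Sigma\Omega\mathsf{E},\mathsf{E}]$. I would resolve this by invoking the homotopy extension property for the cofibration $\Omega\mathsf{E}\hookrightarrow C\Omega\mathsf{E}$: fixing $\Phi_1$, reconstruct $\Phi_2$ as the HEP-extension of $\bar\epsilon$ along $\Phi_1|_{\{*_\mathsf{F}\}\times\Omega\mathsf{E}}$, so that compatibility on the overlap holds by construction. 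The reconstructed endpoint restricts to $\mathsf{j}\circ\Gamma(*_\mathsf{F},\Omega\mathsf{p}(-))$ on $\Omega\mathsf{E}$ and thus differs from $\mathsf{j}\circ H\circ C\Omega\mathsf{p}$ by a class in $[\Sigma\Omega\mathsf{E},\mathsf{E}]$; since $\{*_\mathsf{F}\}\times\Omega\mathsf{E}$ is a retract of $\mathsf{F}\times\Omega\mathsf{E}$, this residual class extends to a self-homotopy of $\Phi_1$ within its homotopy class, which absorbs the discrepancy and produces a single coherent homotopy $\Psi\simeq\Psi'$ on the pushout.
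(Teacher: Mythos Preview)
Your decomposition into the two pieces $\mathsf{F}\times\Omega\mathsf{E}$ and $C\Omega\mathsf{E}$ is the right starting point, and the two preceding lemmas do supply homotopies $\Phi_1$ and $\Phi_2$ on those pieces. The gap is in the gluing paragraph. The retraction argument does not work: to concatenate $\Phi_1$ with a self-homotopy $\Lambda$ whose restriction to $\{*\}\times\Omega\mathsf{E}$ realises the obstruction $\delta\in[\Sigma\Omega\mathsf{E},\mathsf{E}]$, you need $\Lambda$ to be a loop based at $\mathsf{j}\circ p_1$ in $\mathrm{Map}(\mathsf{F}\times\Omega\mathsf{E},\mathsf{E})$. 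Pulling $\delta$ back along the retraction $p_2$ produces instead a loop based at the \emph{constant} map, and there is no reason for $\mathsf{j}\circ p_1$ and the constant map to lie in the same path-component of this mapping space (that would force $\mathsf{j}$ to be null-homotopic). Nor can the discrepancy be absorbed simply by redefining $\widetilde\Gamma$: different choices of the null-homotopy $H$ alter $\Psi|_{C\Omega\mathsf{E}}$ only by classes in the image of $\mathsf{j}_*\circ(C\Omega\mathsf{p})^*\colon[\Sigma\Omega B,\mathsf{F}]\to[\Sigma\Omega\mathsf{E},\mathsf{E}]$, and you have not argued that $\delta$ lies there.

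The paper circumvents this coherence problem precisely through the mechanism set up in the two preceding lemmas: each converts its homotopy into a \emph{strictly} commutative square by introducing a mapping cylinder ($\mathrm{Cyl}(\Gamma)$ for the $\Phi_1$ side, $\mathrm{Cyl}(\mathsf{j}+\bar\epsilon)$ for the $\Phi_2$ side). These strict squares are then assembled as faces of a single strictly commutative cube whose rows are cotriads; taking pushouts of the rows yields a \emph{strictly} commutative square
\[
  \xymatrix{
    \mathsf{F}\rt\Omega\mathsf{E}\ar[r]\ar[d]&\mathrm{Cyl}(\Gamma)\ar[d]^{\mathsf{j}'}\\
    \mathsf{F}\vee\mathrm{Cyl}(\mathsf{j}+\bar\epsilon)\ar[r]&\mathsf{E},
  }
\]
and only afterwards are the cylinders collapsed via $q$ and $1\vee q$ to obtain the desired homotopy-commutative square. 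In this scheme the extension $\widetilde\Gamma$ is \emph{produced} by the construction (as $q$ composed with the top pushout map), rather than chosen in advance, so no ad~hoc matching of null-homotopies on $\Omega\mathsf{E}$ is ever required.
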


\begin{proof}
  Consider a diagram
  \[
    \xymatrix@!C=18pt{
      &C\Omega\mathsf{E}\ar[ld]\ar[dd]|\hole^(.3){i\vert_{C\Omega\mathsf{E}}}&&\Omega\mathsf{E}\ar[rr]\ar[ll]\ar[ld]\ar[dd]|{\hole}&&\mathsf{F}\times\Omega\mathsf{E}\ar[dl]_i\ar[dd]^(.4){p_1}\\
      \mathsf{F}\vee C\Omega\mathsf{E}\ar[dd]^(.4)i&&\mathsf{F}\vee C\Omega\mathsf{E}\ar[rr]^(.56){i+i\circ H\circ C\Omega\mathsf{p}}\ar@{=}[ll]\ar[dd]^(.4)i&&\mathrm{Cyl}(\Gamma)\ar[dd]^(.4){\mathsf{j}'}\\
      &\mathrm{Cyl}(\mathsf{j}+\bar{\epsilon})\ar@{=}[dl]&&\ast\ar[dl]\ar[ll]|{\hole}\ar[rr]|{\hole}&&\mathsf{F}\ar[dl]_{\mathsf{j}}\\
      \mathrm{Cyl}(\mathsf{j}+\bar{\epsilon})&&\mathrm{Cyl}(\mathsf{j}+\bar{\epsilon})\ar[rr]^{q'}\ar@{=}[ll]&&\mathsf{E}.
    }
  \]
  The right top face and the right front face is commutative by the above construction, and the right face is commutative by Lemma \ref{Gamma}. Clearly, other faces are commutative, and so the above diagram is commutative. Then by taking the pushouts of all rows, we get a commutative diagram
  \[
    \xymatrix{
      \mathsf{F}\rt\Omega\mathsf{E}\ar[r]\ar[d]&\mathrm{Cyl}(\Gamma)\ar[d]^{\mathsf{j}'}\\
      \mathsf{F}\vee \mathrm{Cyl}(\mathsf{j}+\bar{\epsilon})\ar[r]^(.7){\mathsf{j}+q'}&\mathsf{E}.
    }
  \]
  By construction, this commutative diagram extends to a homotopy commutative diagram
  \[
    \xymatrix{
      \mathsf{F}\rt\Omega\mathsf{E}\ar@/^20pt/[rr]^{\widetilde{\Gamma}\circ(1\rt\Omega\mathsf{p})}\ar[r]\ar[d]\ar@/_40pt/[dd]_\rho&\mathrm{Cyl}(\Gamma)\ar[d]^{\mathsf{j}'}\ar[r]^q&\mathsf{F}\ar[d]^{\mathsf{j}}\\
      \mathsf{F}\vee \mathrm{Cyl}(\mathsf{j}+\bar{\epsilon})\ar[r]^(.7){\mathsf{j}+q'}\ar[d]^{1\vee q}&\mathsf{E}\ar@{=}[r]\ar@{=}[d]&\mathsf{E}\\
      \mathsf{F}\vee\mathsf{E}\ar[r]^(.6){\mathsf{j}+1}&\mathsf{E}.
  }
  \]
  Thus the proof is finished.
\end{proof}

To identify the fiber $F'$ in \eqref{diagram}, we will use:

\begin{lemma}
  [{\cite[Appendix HL, Proposition]{DF}}]
  \label{fiber}
  Let $\{F_i\to E_i\to B\}_{i\in I}$ be an $I$-diagram of homotopy fibrations over a common path-connected base $B$. Then the sequence
  \[
    \underset{I}{\hocolim}\,F_i\to\underset{I}{\hocolim}\,E_i\to B
  \]
  is a homotopy fibration.
\end{lemma}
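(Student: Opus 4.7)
The plan is to identify the homotopy fiber of $\hocolim_I E_i \to B$ by pulling back along the contractible path space $PB \to B$, and then commuting that pullback past the homotopy colimit. First I would replace the $I$-diagram of homotopy fibrations by a diagram of strict (Hurewicz) fibrations $E_i \to B$, using a functorial fibrant replacement in the category of spaces over $B$, so that each $F_i$ becomes the actual fiber $E_i \times_B *$. Since $PB \to B$ is a fibration with contractible total space, the canonical map $E_i \times_B PB \to F_i$ is a weak equivalence for every $i$, so the $I$-diagram $\{F_i\}$ is objectwise equivalent to $\{E_i \times_B PB\}$.

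The heart of the argument is then the commutation
\[
  \hocolim_I (E_i \times_B PB) \simeq (\hocolim_I E_i) \times_B PB,
\]
after which the right-hand side is, by construction, the homotopy fiber of $\hocolim_I E_i \to B$; path-connectedness of $B$ ensures this fiber is independent of the basepoint choice. Stringing the two equivalences together yields the desired homotopy fibration $\hocolim_I F_i \to \hocolim_I E_i \to B$.

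The main obstacle is the displayed commutation, that is, showing that pullback along the fibration $PB \to B$ preserves homotopy colimits of $I$-diagrams in the category of spaces over $B$. The cleanest route is model-theoretic: in the right-proper Quillen structure on spaces, pullback along a fibration is a left Quillen functor on slice categories and hence preserves all homotopy colimits. A more hands-on route reduces to two basic cases: for pushout-shaped $I$ this is Mather's second cube theorem (a cube whose four vertical faces are homotopy pullbacks and whose top face is a homotopy pushout has a homotopy pushout as its bottom face), and for sequential colimits it is a direct telescope argument using compactness of the interval; any small homotopy colimit can be assembled from these two pieces. Path-connectedness of $B$ enters precisely to make the path fibration an honest universal model for the homotopy fibers in the diagram, so that the term $E_i \times_B PB$ consistently represents $F_i$ across $i$.
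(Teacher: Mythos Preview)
The paper does not supply a proof of this lemma at all: it is quoted verbatim from Dror Farjoun's book \cite{DF} and used as a black box. So there is no in-paper argument to compare your proposal against.

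Your sketch is the standard proof and is correct in outline. One small caution: the sentence ``pullback along a fibration is a left Quillen functor on slice categories'' is clean in simplicial sets (where the slice categories are locally cartesian closed, cofibrations are monomorphisms, and right properness gives preservation of trivial cofibrations), but in $\mathrm{Top}$ with the Quillen model structure the right adjoint $p_*$ need not exist and pullback need not preserve Serre cofibrations on the nose, so the phrase requires either passing through simplicial sets or invoking a descent/universality-of-colimits argument directly. Your second route via Mather's second cube theorem for homotopy pushouts together with a telescope argument for sequential diagrams is the robust topological version, and indeed this is essentially how the result is established in the cited reference. With that caveat, the proposal is fine.
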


We recall a well known property of homotopy pushouts. See \cite[Proposition 6.2.6]{A} for a proof.

\begin{lemma}
  \label{hpo}
  Let $W$ denote the homotopy pushout of a cotriad
  \[
    X\xleftarrow{g}Y\to Z.
  \]
  If $g$ is a cofibration, then the projection $W\to X\cup_YZ$ is a homotopy equivalence.
\end{lemma}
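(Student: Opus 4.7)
The plan is to take the double mapping cylinder
\[
W=X\sqcup(Y\times I)\sqcup Z\,/\,\bigl((y,0)\sim g(y),\ (y,1)\sim f(y)\bigr)
\]
as an explicit model of the homotopy pushout, so that the projection $W\to X\cup_YZ$ is realized as the map that collapses the cylinder coordinate. Equivalently, $W$ is the strict pushout of the cotriad $M_g\xleftarrow{i_1}Y\xrightarrow{f}Z$, where $M_g$ is the mapping cylinder of $g$ and $i_1$ is the inclusion at $Y\times\{1\}$, which is always a cofibration regardless of any hypothesis on $g$.

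To compare the two pushouts I would use the standard deformation retraction $r\colon M_g\to X$, which satisfies $r\circ i_1=g$. Together with the identities on $Y$ and $Z$, it assembles into a map of cotriads in which the two left legs $i_1$ and $g$ are both cofibrations and all three vertical components are homotopy equivalences. The induced map of strict pushouts is precisely the projection $W\to X\cup_YZ$. The standard gluing lemma for pushouts along cofibrations then identifies this induced map as a homotopy equivalence, which is the desired conclusion.

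The main obstacle is justifying the gluing lemma itself at the level of honest homotopy equivalences rather than weak equivalences. An elementary construction of a homotopy inverse $X\cup_YZ\to W$ proceeds by using the homotopy extension property of $g$ to produce a retraction $X\times I\to X\times\{0\}\cup g(Y)\times I$, then using this retraction to push a halo of $g(Y)\subset X$ into the cylinder portion of $W$; the homotopies showing that the two composites are homotopic to the identity can be arranged to be constant on $Z$ and supported on the cylinder $Y\times I$. Keeping everything basepoint preserving is automatic from the standing assumption that every basepoint is non-degenerate, so the statement follows.
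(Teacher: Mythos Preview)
Your argument is correct: modeling $W$ as the double mapping cylinder, recognizing it as the strict pushout of $M_g\xleftarrow{i_1}Y\to Z$, and then invoking the gluing lemma for the map of cotriads induced by the deformation retraction $M_g\to X$ is exactly the standard proof. Your remarks about producing an honest homotopy inverse via the HEP of $g$ are also on point.

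There is no comparison to make with the paper's own argument, however, because the paper does not prove this lemma at all---it simply cites \cite[Proposition 6.2.6]{A}. What you have written is essentially a sketch of the proof one finds in that reference (or in any standard treatment of homotopy pushouts), so your proposal is both correct and more detailed than what the paper provides.
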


Let $\tilde{\mathsf{f}}\colon A\to\mathsf{F}$ be the lift of the map $\mathsf{f}$.

\begin{proposition}
  \label{cofibration fiber}
  There is a homotopy cofibration
  \[
    A\rt\Omega B\xrightarrow{\bar{\theta}}\mathsf{F}\to F'.
  \]
\end{proposition}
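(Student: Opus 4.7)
The plan is to realise $F'$ as a homotopy pushout via Lemma \ref{fiber}, and then to identify that pushout with the mapping cone of a map $\bar\theta\colon A\rt\Omega B\to\mathsf{F}$ using a Mather-style cube of cofibrations.

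First, I would view $E'\simeq\mathsf{E}\cup_{\mathsf{f}}CA$ as the strict pushout of the cotriad $\ast\leftarrow A\xrightarrow{\mathsf{f}}\mathsf{E}$ (arranging $\mathsf{f}$ to be a cofibration if necessary). Each of the three terms carries a map to $B$ --- namely $\mathsf{p}$ together with two constant maps --- and the compatibility $\mathsf{p}\circ\mathsf{f}=\ast$ makes the cotriad strictly commute. Replacing the two constant maps by their standard path-space fibrations over $B$ gives a diagram of genuine fibrations over $B$, still with total hocolim $E'$. Applying Lemma \ref{fiber} then yields
\[
F'\simeq\hocolim\bigl(\Omega B\xleftarrow{p_2}A\times\Omega B\xrightarrow{\Gamma'}\mathsf{F}\bigr),
\]
where $\Gamma'=\Gamma\circ(\tilde{\mathsf{f}}\times 1)$.

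Second, I would define $\bar\theta\colon A\rt\Omega B\to\mathsf{F}$ to be $\Gamma'$ on $A\times\Omega B$ and $H$ on $\ast\times C\Omega B$, well-defined because both restrict on $\ast\times\Omega B$ to the canonical map $\Omega B\to\mathsf{F}$. Analogously I would define $q\colon A\rt\Omega B\to C\Omega B$ to be $p_2$ on $A\times\Omega B$ and the identity on $\ast\times C\Omega B$. A direct colimit computation shows that the square
\[
\xymatrix{
A\times\Omega B\ar[r]^{p_2}\ar@{^{(}->}[d]&\Omega B\ar@{^{(}->}[d]\\
A\rt\Omega B\ar[r]^q&C\Omega B
}
\]
is a pushout of cofibrations; together with the identity on $\mathsf{F}$ this furnishes a morphism of cotriads
\[
(\Omega B\xleftarrow{p_2}A\times\Omega B\xrightarrow{\Gamma'}\mathsf{F})\longrightarrow(C\Omega B\xleftarrow{q}A\rt\Omega B\xrightarrow{\bar\theta}\mathsf{F}),
\]
whose three vertical components are cofibrations with cofibers $\Sigma\Omega B$, $\Sigma\Omega B$ and $\ast$ respectively.

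Third, since $C\Omega B$ is contractible, Lemma \ref{hpo} identifies the homotopy pushout of the target cotriad with $C_{\bar\theta}=\mathsf{F}\cup_{\bar\theta}C(A\rt\Omega B)$, and the morphism of cotriads above induces a map $F'\to C_{\bar\theta}$. I would prove this is a homotopy equivalence by a Mather cube argument: the cotriad of vertical cofibers is $\ast\leftarrow\Sigma\Omega B\xrightarrow{\mathrm{id}}\Sigma\Omega B$, whose homotopy pushout is contractible, so the cofiber of $F'\to C_{\bar\theta}$ is contractible. The hardest step is this final cube verification, and specifically checking that $q$ induces the identity on the two vertical cofibers $\Sigma\Omega B$ --- something that reduces to the canonical identifications $(A\rt\Omega B)/(A\times\Omega B)\cong C\Omega B/\Omega B\cong\Sigma\Omega B$ being compatible with $q$. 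With that in hand, the standard ``hocolim of cofibers equals cofiber of hocolims'' principle for cubes of cofibrations yields the desired homotopy cofibration $A\rt\Omega B\xrightarrow{\bar\theta}\mathsf{F}\to F'$.
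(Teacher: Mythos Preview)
Your argument is correct and follows essentially the same route as the paper: first invoke Lemma~\ref{fiber} to identify $F'$ with the homotopy pushout of the fibrewise cotriad over $B$, then use the null-homotopy $H$ to cone off the $\Omega B$-part and recognise the result as the mapping cone of $\bar\theta=\widetilde\Gamma\circ(\tilde{\mathsf f}\rt 1)$. The only cosmetic differences are that the paper replaces $\ast$ and $\mathsf{F}$ by $CA$ and $\mathrm{Cyl}(\Gamma)$ to keep all squares strictly commutative (your path-fibration replacement serves the same purpose), and that the paper packages the cube argument as a three-row diagram with contractible left term $CA\rt\Omega B$ rather than your two-row morphism of cotriads with contractible left term $C\Omega B$.
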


\begin{proof}
  By Lemma \ref{Gamma}, there is a commutative diagram
  \[
    \xymatrix{
      CA\times\Omega B\ar[d]^{p_1}&A\times\Omega B\ar[rr]^(.6){i\circ(\tilde{\mathsf{f}}\times 1)}\ar[l]\ar[d]^{p_1}&&\mathrm{Cyl}(\Gamma)\ar[d]^{\mathsf{j}'}\\
      CA\ar[d]^\ast&A\ar[rr]^{\mathsf{f}}\ar[l]\ar[d]^{\mathsf{p}\circ\mathsf{f}}&&\mathsf{E}\ar[d]^{\mathsf{p}}\\
      B\ar@{=}[r]&B\ar@{=}[rr]&&B.
    }
  \]
  where all columns are homotopy fibrations. Let $\mathsf{F}'$ be the oushout of the top row. Then by Lemmas \ref{fiber} and \ref{hpo}, we get a homotopy fibration $\mathsf{F}'\to\mathsf{E}\cup CA\to B$, hence a homotopy equivalence
  \[
    F'\simeq\mathsf{F}'.
  \]
  Since the map $\Omega\mathsf{p}$ has a right homotopy inverse, the restriction of $i\colon\mathsf{F}\times\Omega B\to\mathrm{Cyl}(\Gamma)$ to $*\times\Omega B$ is null-homotopic. Then we get a commutative diagram
  \[
    \xymatrix{
      \Omega B\ar[d]&\Omega B\ar[rr]\ar[l]\ar[d]&&\ast\ar[d]\\
      CA\times\Omega B\ar[d]&A\times\Omega B\ar[rr]^(.6){i\circ(\tilde{\mathsf{f}}\times 1)}\ar[l]\ar[d]&&\mathrm{Cyl}(\Gamma)\ar[d]^q\\
      CA\rt\Omega B&A\rt\Omega B\ar[rr]^(.6){\widetilde{\Gamma}\circ(\tilde{\mathsf{f}}\rt 1)}\ar[l]&&\mathsf{F}
    }
  \]
  where all columns are homotopy cofibrations and $\widetilde{\Gamma}\colon\mathsf{F}\rt\Omega B\to\mathsf{F}$ is an extension of $\Gamma$. Let $\mathsf{F}''$ be the pushout of the bottom row. Then since homotopy pushouts commute with taking cofibers, we get a homotopy cofibration $*\to\mathsf{F}'\to\mathsf{F}''$ by Lemma \ref{hpo}, hence a homotopy equivalence
  \[
    \mathsf{F}'\simeq\mathsf{F}''.
  \]
  On the other hand, since $CA\rt\Omega B$ is contractible, we have a homotopy cofibration
  \[
    A\rt\Omega B\xrightarrow{\widetilde{\Gamma}\circ(\tilde{\mathsf{f}}\rt 1)}\mathsf{F}\to\mathsf{F}''.
  \]
  Thus the proof is finished.
\end{proof}

\begin{lemma}
  \label{rho}
  There is a natural map $w\colon\Omega X*\Omega Y\to X\rt\Omega Y$ satisfying the commutative diagram
  \[
    \xymatrix{
      \Omega X*\Omega Y\ar@{=}[r]\ar[d]^w&\Omega X*\Omega Y\ar[d]^{[\epsilon,\epsilon]}\\
      X\rt\Omega Y\ar[r]^\rho&X\vee Y.
    }
  \]
\end{lemma}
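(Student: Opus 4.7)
The plan is to use the double mapping cylinder presentation of the join,
\[
  \Omega X*\Omega Y=(C\Omega X\times\Omega Y)\cup_{\Omega X\times\Omega Y}(\Omega X\times C\Omega Y),
\]
which is structurally parallel to the presentation $X\rt\Omega Y=(X\times\Omega Y)\cup_{*\times\Omega Y}(*\times C\Omega Y)$ employed in the paper. I will define $w$ piecewise: on $C\Omega X\times\Omega Y$, set $w=\bar{\epsilon}\times 1$, landing in the summand $X\times\Omega Y\subset X\rt\Omega Y$; on $\Omega X\times C\Omega Y$, set $w(\alpha,c)=(*,c)$, landing in the summand $*\times C\Omega Y\subset X\rt\Omega Y$. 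Naturality in $X$ and $Y$ will be immediate from the naturality of $\bar{\epsilon}$ and of the pushout constructions involved.

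Next I verify agreement on the overlap $\Omega X\times\Omega Y$. The first formula produces $(\bar{\epsilon}(\alpha),\beta)=(*,\beta)$, because the base $\Omega X\times\{0\}\subset C\Omega X$ is collapsed by the projection $C\Omega X\to\Sigma\Omega X$ and $\epsilon$ is basepoint preserving. The second formula produces $(*,\beta)$ with $\beta$ viewed in the base of the cone $C\Omega Y$. Both represent the same point in the pushout $X\rt\Omega Y$, which is glued along $*\times\Omega Y$, so $w$ is well defined.

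Finally I will check $\rho\circ w=[\epsilon,\epsilon]$ by evaluating on each half of the join. Since $\rho$ acts by $p_1$ on $X\times\Omega Y$ and by $\bar{\epsilon}$ on $*\times C\Omega Y$, the composite $\rho\circ w$ is $\bar{\epsilon}\circ p_1\colon C\Omega X\times\Omega Y\to X\hookrightarrow X\vee Y$ on one half and $\bar{\epsilon}\circ p_2\colon\Omega X\times C\Omega Y\to Y\hookrightarrow X\vee Y$ on the other. These are precisely the two halves of the Whitehead product $[\epsilon,\epsilon]$ in the double mapping cylinder model of the join. The one step that warrants a line of outside justification is this last identification, which is just the standard description of $[f,g]\colon A*B\to Z$ in terms of the null-homotopies $CA\to Z$ and $CB\to Z$ canonically extending $f\colon\Sigma A\to Z$ and $g\colon\Sigma B\to Z$.
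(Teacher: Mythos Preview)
Your argument is correct and is essentially the paper's own proof: the paper packages exactly your piecewise definition of $w$ (namely $\bar{\epsilon}\times 1$ on $C\Omega X\times\Omega Y$ and $p_2$ on $\Omega X\times C\Omega Y$) as the induced map on row-wise pushouts of a $3\times 3$ commutative diagram whose bottom two rows are the defining diagram for $\rho$, so that the composite $\rho\circ w$ is read off column-by-column as $\bar{\epsilon}\circ p_1$ and $\bar{\epsilon}\circ p_2$, i.e.\ $[\epsilon,\epsilon]$. Your explicit check of agreement on the overlap $\Omega X\times\Omega Y$ is the commutativity of the upper squares in that diagram, and your final identification with the Whitehead product is the same standard fact the paper is implicitly invoking.
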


\begin{proof}
  Consider a commutative diagram
  \[
    \xymatrix{
      C\Omega X\times\Omega Y\ar[d]^{\tilde{\epsilon}\times 1}&\Omega X\times\Omega Y\ar[l]\ar[r]\ar[d]^{p_2}&\Omega X\times C\Omega Y\ar[d]^{p_2}\\
      X\times\Omega Y\ar[d]^{p_2}&\Omega Y\ar[l]\ar[r]\ar[d]&C\Omega Y\ar[d]^{\tilde{\epsilon}}\\
      X&\ast\ar[l]\ar[r]&Y.
    }
  \]
  Then by taking the pushouts of the three rows, we obtain the diagram in the statement.
\end{proof}

Let $i\colon X\to X\rt Y$ denote the inclusion, and let $\overline{w}$ denote the composite
\[
  X*\Omega Y\xrightarrow{E*1}\Omega\Sigma X*\Omega Y\xrightarrow{w}\Sigma X\rt\Omega Y.
\]
where $E\colon Y\to\Omega\Sigma Y$ is the suspension map. By definition, $\overline{w}$ is natural with respect to $X$ and $Y$.

\begin{lemma}
  \label{decomposition}
  The map
  \[
    \overline{w}+i\colon(X*\Omega Y)\vee\Sigma X\to\Sigma X\rt\Omega Y
  \]
  is a homotopy equivalence.
\end{lemma}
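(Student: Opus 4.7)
The plan is to exhibit $\overline{w}+i$ as a map between two cofibration sequences whose end-maps are both equivalences, and conclude by a five-lemma argument. For the target, I would observe that $i\colon\Sigma X\to \Sigma X\rt\Omega Y$ is a cofibration with cofiber naturally $\Sigma X\wedge\Omega Y$: collapsing the contractible piece $*\times C\Omega Y$ gives the half smash $\Sigma X\rtimes\Omega Y$, and a further collapse of $\Sigma X$ yields $\Sigma X\wedge\Omega Y$. Let $q\colon \Sigma X\rt\Omega Y\to \Sigma X\wedge\Omega Y$ denote the resulting quotient. The source sequence is the obvious split cofibration
\[
\Sigma X\hookrightarrow (X*\Omega Y)\vee\Sigma X\twoheadrightarrow X*\Omega Y.
\]

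The heart of the argument is to prove that $q\circ\overline{w}\colon X*\Omega Y\to \Sigma X\wedge\Omega Y$ is homotopic to the canonical equivalence $X*\Omega Y\simeq\Sigma(X\wedge\Omega Y)=\Sigma X\wedge\Omega Y$. I would do this by chasing points through the pushout diagrams in the proof of Lemma \ref{rho} defining $w$, together with $E*1$, using the identity $\bar{\epsilon}(c\cdot E(x))=[c,x]\in\Sigma X$. Explicitly, on $(cx,y)\in CX\times\Omega Y\subset X*\Omega Y$, the composite $\overline{w}$ produces $([c,x],y)\in\Sigma X\times\Omega Y\subset \Sigma X\rt\Omega Y$, which $q$ sends to $[c,x]\wedge y$; on $(x,cy)\in X\times C\Omega Y$, the image of $\overline{w}$ lies in $C\Omega Y\subset \Sigma X\rt\Omega Y$ and is collapsed by $q$. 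This matches the canonical equivalence.

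Finally, since $q\circ i=*$, the map $\overline{w}+i$ defines a morphism of cofibration sequences with $\mathrm{id}_{\Sigma X}$ on the sub-spaces and $q\circ\overline{w}$ on the cofibers. As both end-maps are homotopy equivalences, the five lemma on the associated long exact sequences in homology shows that $\overline{w}+i$ is a homology isomorphism, hence a homotopy equivalence by Whitehead's theorem (both source and target being simply connected CW spaces under mild hypotheses). The principal obstacle is the second step: the explicit point-chase through the layered pushout definitions of $w$, $E*1$, and $q$ is delicate, though ultimately routine once the diagrams are laid out side by side.
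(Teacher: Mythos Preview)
Your argument is correct but runs ``dually'' to the paper's. Both proofs exhibit $\overline{w}+i$ as the middle map in a morphism of homotopy cofibre sequences with equivalences on the two ends, but the cofibration used for the target is different. The paper constructs a cofibration
\[
  X*\Omega Y\xrightarrow{\overline{w}}\Sigma X\rt\Omega Y\xrightarrow{q}\Sigma X
\]
by taking row-wise pushouts in a $3\times 3$ diagram, so that $\overline{w}$ is the \emph{first} map and the only compatibility to check is the immediate identity $q\circ i\simeq 1$. You instead use the cofibration
\[
  \Sigma X\xrightarrow{i}\Sigma X\rt\Omega Y\xrightarrow{q}\Sigma X\wedge\Omega Y,
\]
so that $i$ is the first map, and the compatibility to check is that $q\circ\overline{w}$ agrees with the standard equivalence $X*\Omega Y\simeq\Sigma X\wedge\Omega Y$; this is exactly the point-chase you flag as the principal obstacle. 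What the paper's route buys is that this explicit coordinate computation is entirely avoided---the needed identity $q\circ i\simeq 1$ is a one-liner. What your route buys is that the target cofibration is transparent from the very definition of $\rt$ and needs no auxiliary $3\times 3$ diagram. The concluding five-lemma/Whitehead step (with its implicit simple-connectivity caveat) is the same in both arguments.
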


\begin{proof}
  There is a commutative diagram
  \[
    \xymatrix{
      X\times C\Omega Y\ar[d]^{p_2}&X\times\Omega Y\ar[l]\ar[r]\ar[d]^{p_2}&C X\times \Omega Y\ar[d]^{\tilde{\epsilon}\circ CE\times 1}\\
      C\Omega Y\ar[d]&\Omega Y\ar[r]\ar[l]\ar[d]&\Sigma X\times\Omega Y\ar[d]\\
      \Sigma X\rt C\Omega Y&\Sigma X\rt \Omega Y\ar@{=}[r]\ar[l]&\Sigma X\rt \Omega Y
    }
  \]
  where all columns are homotopy cofibrations. Then since the inclusion $i\colon\Sigma X\to\Sigma X\rt C\Omega Y$ is a homotopy equivalence, we get a homotopy cofibration
  \[
    X*\Omega Y\xrightarrow{\overline{w}}\Sigma X\rt\Omega Y\xrightarrow{q}\Sigma X
  \]
  by taking the pushouts of the three rows. By definition, we have $q\circ i\simeq 1$, and so there is a homotopy commutative diagram
  \[
    \xymatrix{
      X*\Omega Y\ar[r]^(.39){\rm incl}\ar@{=}[d]&(X*\Omega Y)\vee\Sigma X\ar[r]^(.68){\rm proj}\ar[d]^{\overline{w}+i}&\Sigma X\ar@{=}[d]\\
      X*\Omega Y\ar[r]^(.47){\overline{w}}&\Sigma X\rt\Omega Y\ar[r]^(.6)q&\Sigma X
    }
  \]
  where the two rows are homotopy cofibrations. Then the middle vertical map is a homotopy equivalence, completing the proof.
\end{proof}

Now we are ready to prove Theorem \ref{main}.

\begin{proof}
  [Proof of Theorem \ref{main}]
  Since the projection $A\rt\Omega B\to A\rtimes\Omega B$ is a homotopy equivalence, the first statement follows from Proposition \ref{cofibration fiber}. Suppose $A=\Sigma\overline{A}$. By Lemmas \ref{Gamma-rho} and \ref{rho}, there is a homotopy commutative diagram
  \begin{equation}
    \label{big diagram}
    \xymatrix@!C{
      \overline{A}*\Omega B\ar[r]^{\overline{w}}\ar[d]^{1*E}&A\rt\Omega B\ar[r]^{1\rt\mathsf{s}}\ar[d]^{1\rt E}&A\rt\Omega\mathsf{E}\ar[r]^{\tilde{\mathsf{f}}\rt 1}\ar[d]^{1\rt E}&\mathsf{F}\rt\Omega\mathsf{E}\ar@{=}[d]\\
      \overline{A}*\Omega\Sigma\Omega B\ar[r]^{\overline{w}}\ar[d]^{E*1}&A\rt\Omega\Sigma\Omega B\ar[r]^{1\rt\Omega\Sigma\mathsf{s}}\ar[d]^\rho&A\rt\Omega\Sigma\Omega\mathsf{E}\ar[r]^(.55){\tilde{\mathsf{f}}\rt\Omega\epsilon}\ar[d]^\rho&\mathsf{F}\rt\Omega\mathsf{E}\ar[r]^(.59){\widetilde{\Gamma}\circ(1\rt\mathsf{p})}\ar[d]^\rho&\mathsf{F}\ar[d]^{\mathsf{j}}\\
      \Omega A*\Omega\Sigma\Omega B\ar[r]^(.55){[\epsilon,\epsilon]}&A\vee \Sigma\Omega B\ar[r]^{1\vee\Sigma\mathsf{s}}&A\vee\Sigma\Omega\widehat{E}\ar[r]^(.55){\tilde{\mathsf{f}}\vee\epsilon}&\mathsf{F}\vee\mathsf{E}\ar[r]^(.55){\mathsf{j}+1}&\mathsf{E}.
    }
  \end{equation}
  By definition, the map $\hat{\theta}\colon A\rt\Omega B\to\widehat{F}$ in Proposition \ref{fiber} is homotopic to the composite
  \[
    A\rt\Omega B\xrightarrow{1\rt\mathsf{s}}A\rt\Omega\mathsf{E}\xrightarrow{\tilde{\mathsf{f}}\rt 1}\mathsf{F}\rt\Omega\mathsf{E}\xrightarrow{\widetilde{\Gamma}\circ(1\rt\mathsf{p})}\mathsf{F}.
  \]
  Then the composite $\overline{A}*\Omega B\xrightarrow{\overline{w}}A\rt\Omega B\xrightarrow{\bar{\theta}}\mathsf{F}\xrightarrow{\mathsf{j}}\mathsf{E}$
  is homotopic to the left-bottom perimeter of \eqref{big diagram} which is the Whitehead product $[\mathsf{f},\epsilon\circ\Sigma\mathsf{s}]$. By the definition of $\hat{\theta}$, the composite
  \[
    A\xrightarrow{i}A\rt\Omega B\xrightarrow{\bar{\theta}}\mathsf{F}\xrightarrow{\mathsf{j}}\mathsf{E}
  \]
  equals $\mathsf{f}$. Thus by Lemma \ref{decomposition}, the second statement is proved, and therefore the proof is finished by applying the natural homotopy equivalences $\overline{A}*\Omega B\simeq A\wedge\Omega B$, $A\rt\Omega B\simeq A\rtimes\Omega B$ and $\mathsf{F}\simeq F$.
\end{proof}

\end{document}